
%
\documentclass{amsart}

\usepackage{enumerate}
\usepackage{amsmath}%
\usepackage{amsfonts}%
\usepackage{amssymb}%
\usepackage{graphicx}
\usepackage{mathrsfs}
\usepackage{hyperref}
\usepackage[margin=1in]{geometry}
%
\newtheorem{theorem}{Theorem}
\theoremstyle{plain}

\newtheorem{conjecture}[theorem]{Conjecture}
\newtheorem{corollary}[theorem]{Corollary}

\newtheorem{proposition}[theorem]{Proposition}

\numberwithin{equation}{section}
\numberwithin{theorem}{section}
\numberwithin{case}{section}

\numberwithin{subcase}{case}

\def\A{\mathcal{A}}
\def\B{\mathcal{B}}

\def\F{\mathcal{F}}

\def\h{\mathcal{H}}

\def \a{\alpha}
\def \e{\epsilon}

\begin{document}
\title[Perfect Matchings in Hypergraphs]{Perfect Matchings in Hypergraphs and the Erd\H{o}s matching conjecture}
\author{Jie Han}
\address{School of Mathematics, University of Birmingham, Birmingham, B15 2TT, UK}
\email{J.Han@bham.ac.uk, jhan@ime.usp.br}
\thanks{The author is supported by FAPESP (2014/18641-5, 2015/07869-8).}
\date{\today}
\subjclass{Primary 05C70, 05C65} %
\keywords{Perfect matching, Hypergraph, Erd\H{o}s matching conjecture}%

\begin{abstract}
We prove a new upper bound for the minimum $d$-degree threshold for perfect matchings in $k$-uniform hypergraphs when $d<k/2$.
As a consequence, this determines exact values of the threshold when $0.42k \le d < k/2$ or when $(k,d)=(12,5)$ or $(17,7)$.
Our approach is to give an upper bound on the Erd\H{o}s Matching Conjecture and convert the result to the minimum $d$-degree setting by an approach of K\"uhn, Osthus and Townsend.
To obtain exact thresholds, we also apply a result of Treglown and Zhao.
\end{abstract}

\maketitle

\section{Introduction}

\subsection{Perfect matchings via minimum degree conditions}

Given $k\ge 2$, a \emph{$k$-uniform hypergraph} (or a \emph{$k$-graph}) $H$ is a pair $H=(V, E)$, where $V$ is a finite vertex set and $E$ is a family of $k$-element subsets of $V$.
Given a $k$-graph $H$ and a set $S$ of $d$ vertices in $V(H)$, $0\le d\le k-1$, we denote by $\deg_H(S)$ the number of edges of $H$ containing $S$.
The \emph{minimum $d$-degree} of $H$ then is defined as
\[
\delta_d(H) = \min\left\{ \deg_H(S): S\in \binom{V(H)}{d} \right\}.
\]
Note that $\delta_0(H)=|E(H)|$ is the number of edges of $H$.

A \emph{matching} $M$ in $H$ is a collection of disjoint edges of $H$.
The \emph{size} of $M$ is the number of edges in $M$.
We say $M$ is a \emph{perfect matching} if it has size $|V|/k$.
For integers $n,k,d,s$ satisfying $0\le d\le k-1$ and $0\le s \le n/k$, let $m_d^s(k,n)$ be the smallest integer $m$ such that every $n$-vertex $k$-graph $H$ with $\delta_d(H)\ge m$ has a matching of size $s$.
For simplicity, we write $m_d(k,n)$ for $m_d^{n/k}(k,n)$.
Throughout this note, $o(1)$ stands for some function that tends to $0$ as $n$ tends to infinity.
The following conjecture \cite{HPS, KuOs-survey} has received much attention in the last decade (see \cite{AFHRRS, CzKa, HPS, Han14_mat, Han15_mat, Khan1, Khan2, KO06mat, KOT, MaRu, Pik, RRS06mat, RRS09, TrZh12, TrZh13, TrZh15} and the recent surveys \cite{RR, zsurvey}). 

\begin{conjecture}\label{conj:mat}
For $1\le d\le k-1$ and $k \mid n$,
\[
m_d(k,n) = \left(\max \left\{ \frac12, 1- \left(1- \frac{1}{k} \right)^{k-d}  \right\} +o(1)\right)\binom{n-d}{k-d}.
\]
\end{conjecture}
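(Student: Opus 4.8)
\emph{Proof proposal.} Conjecture~\ref{conj:mat} is open in general, so the realistic plan is to prove the two inequalities separately and to obtain the stated asymptotics in whatever parameter range the available tools permit. The lower bound $m_d(k,n)\ge(\alpha-o(1))\binom{n-d}{k-d}$, with $\alpha=\max\{1/2,\,1-(1-1/k)^{k-d}\}$, is the easy direction: exhibit two $n$-vertex $k$-graphs with no perfect matching, one realising each term. For the \emph{space barrier}, fix $A\subseteq V$ with $|A|=n/k-1$ and let the edges be all $k$-sets meeting $A$; any matching uses a distinct vertex of $A$ per edge, so its size is at most $|A|<n/k$, while a $d$-set $S$ disjoint from $A$ has $\deg(S)=\binom{n-d}{k-d}-\binom{n-d-|A|}{k-d}=(1-(1-1/k)^{k-d}-o(1))\binom{n-d}{k-d}$. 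For the \emph{divisibility (parity) barrier}, partition $V=V_1\cup V_2$ with $|V_1|$ odd and $|V_1|,|V_2|=\Theta(n)$, and take all $k$-sets $e$ with $|e\cap V_1|$ even; summing $|e\cap V_1|$ over a would-be perfect matching writes the odd number $|V_1|$ as a sum of even numbers, a contradiction, while every $d$-set has degree $(\tfrac12-o(1))\binom{n-d}{k-d}$. The larger of the two bounds is the claimed quantity.

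The content is the upper bound, which I would attack with the \emph{absorbing method}. With $\alpha$ as above, suppose $\delta_d(H)\ge(\alpha+\e)\binom{n-d}{k-d}$. Step~1: build a small \emph{absorbing} matching $M_0$ on $o(n)$ vertices so that for every $W\supseteq V(M_0)$ with $|W|$ small and $k\mid|W|$, the induced subgraph $H[W]$ has a perfect matching; the needed abundance of local absorbers follows from the degree hypothesis, and for \emph{exact} thresholds one invokes the lattice-based strengthening of Treglown and Zhao, which also neutralises the divisibility obstruction. Step~2: in $H-V(M_0)$ find a matching leaving only $O(1)$ vertices uncovered. Step~3: absorb those leftover vertices into $M_0$. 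Steps~1 and 3 are by now fairly routine modulo careful bookkeeping to keep the constant in front of $\binom{n-d}{k-d}$ from deteriorating; the crux is Step~2.

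For Step~2 I would reduce the $d$-degree hypothesis to an edge-count (the $d=0$, Erd\H{o}s-matching) hypothesis by a sampling/supersaturation argument in the style of K\"uhn, Osthus and Townsend: if every $d$-set has degree at least $(\alpha+\e)\binom{n-d}{k-d}$, then a uniformly random $m$-subset of $V(H)$ with $m=ks$ spans at least $(\alpha+\e')\binom{m}{k}$ edges in expectation, so provided $(\alpha+\e')\binom{m}{k}$ exceeds $\max\{\binom{ks-1}{k},\ \binom{m}{k}-\binom{m-s+1}{k}\}$ that subset contains a matching of size $s$ by a bound on the Erd\H{o}s matching conjecture; patching together such local matchings (again with a reservoir/absorbing step) produces the near-perfect matching. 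Thus the whole argument rests on a statement of the form: \emph{an $N$-vertex $k$-graph with more than $\max\{\binom{ks-1}{k},\,\binom{N}{k}-\binom{N-s+1}{k}\}$ edges has a matching of size $s$}, at least for $s$ not too far from $N/k$.

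The main obstacle is precisely that last ingredient, since the Erd\H{o}s matching conjecture is itself unresolved: it is known when $N$ is large compared with $s$ (Frankl), and in certain windows where $s$ is close to $N/k$, but not in general. Consequently the plan yields Conjecture~\ref{conj:mat} — and, combined with the exact-threshold machinery, the sharp values claimed in the abstract — only in the range where a sufficiently strong Erd\H{o}s-matching estimate is available, which is what pins the hypothesis to $0.42k\le d<k/2$ and the sporadic pairs $(12,5)$ and $(17,7)$. Extending to $d<0.42k$, or making the crossover between the two extremal barriers clean, would require genuinely new cases of the Erd\H{o}s matching conjecture rather than merely more careful combinatorics.
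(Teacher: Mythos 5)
You correctly recognize that Conjecture~\ref{conj:mat} is open (the paper only records it as a conjecture), and your two lower-bound constructions are exactly the paper's: the space barrier $G(n/k-1)$ and a parity construction from $\h_{\text{ext}}(n,k)$. Your upper-bound framework---sample a random vertex subset to convert the $d$-degree hypothesis into an edge-density hypothesis, invoke an Erd\H{o}s-matching bound to find a large matching, then absorb, with the Treglown--Zhao lattice machinery handling divisibility---is also the architecture the paper uses, although the paper does not run an absorbing argument from scratch: Steps~1 and~3 are packaged into Theorem~\ref{thm:TrZh} (Treglown--Zhao), and the density reduction into Propositions~\ref{prop:41} and~\ref{prop:23} of K\"uhn--Osthus--Townsend. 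So the skeleton of your plan agrees with the paper's.

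The genuine gap is that you treat the needed Erd\H{o}s-matching estimate as already available, which it is not. You write that the relevant bound ``is known when $N$ is large compared with $s$ (Frankl), and in certain windows where $s$ is close to $N/k$.'' The sharp Erd\H{o}s bound is known only for $n\ge (2k-1)s+k$ (Frankl) and a few small cases; in the regime that actually matters for $d$ close to $k/2$, namely $n$ only a small constant multiple of $ks$ (here $\a=k/(k-d)$ with $\a\in(1,2-1/k]$), no prior result suffices. The paper's load-bearing contribution is Theorem~\ref{thm:main}, a \emph{new}, non-sharp but explicit bound $|\F|\le \binom nk - \binom{n-s}k + \frac{(2-\a)k-1}{\a k-1}s\binom{n-s-1}{k-1}$ valid for $n\ge \a k(s+1)+k-1$, proved by adapting Frankl's shadow inequality and cross-dependent-family lemma. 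Without supplying that theorem (or an equivalent), your Step~2 has no quantitative content, so your proposal establishes neither Theorem~\ref{thm:int_mat} nor Corollary~\ref{cor:mdk}, and certainly not the ranges $0.42k\le d<k/2$ or $(k,d)\in\{(12,5),(17,7)\}$ that you cite. In short, your architecture matches the paper's, but the one genuinely new ingredient---an Erd\H{o}s-matching bound in the $n=\Theta(ks)$ window---is missing and cannot be sourced from the literature you invoke.
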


We remark that the quantities in the lower bound of the conjecture come from two different constructions. 
The second term can be seen by the following $k$-graph.
Let $G(s)$ be the $k$-graph on $V$ whose edges are all $k$-sets that intersect a fixed $S\subseteq V$ with $|S|=s < n/k$. Clearly $G(n/k-1)$ has no perfect matching.

On the other hand, the quantity $1/2$ comes from the following parity construction.
Given a partition $V$ into non-empty sets $A, B$, let $\B_{n,k}(A,B)$ (or $\overline{\B}_{n,k}(A,B)$) be the $k$-uniform hypergraph with vertex set $V$ and whose edge set consists of all $k$-element subsets of $V$ that contains an odd (or even) number of vertices in $A$.
Define $\h_{\text{ext}}(n,k)$ to be the collection of all hypergraphs $\overline{\B}_{n,k}(A,B)$ where $|A|$ is odd, and all hypergraphs ${\B}_{n,k}(A,B)$ where $|A|-n/k$ is odd.
It is easy to see that no hypergraph in $\h_{\text{ext}}(n,k)$ contains a perfect matching (see \cite{TrZh12}).
Define $\delta(n,k,d)$ to be the maximum of the minimum $d$-degrees among all the hypergraphs in $\h_{\text{ext}}(n,k)$.
Note that $\delta(n,k,d)= (1/2+o(1)) \binom{n-d}{k-d}$ but the general formula is unknown (see \cite{TrZh12} for more discussion).


Given $k\ge 3$, R\"odl, Ruci\'nski and Szemer\'edi \cite{RRS09} showed that $m_{k-1}(k,n)=\delta(n,k,k-1)+1$ for large $n$.
Treglown and Zhao \cite{TrZh12, TrZh13} generalized their result and showed that $m_d(k,n)=\delta(n,k,d)+1$ for all $d\ge k/2$.
For $d<k/2$, Conjecture~\ref{conj:mat} has been verified \cite{AFHRRS, HPS, Khan1, Khan2, KOT, TrZh15} for only a few cases, i.e., for $(k,d)\in \{(3,1), (4,1),(5,1),(5,2), (6,2), (7,3)\}$.
Moreover, exact values of $m_d(k,n)$ are known for $(k,d)\in \{(3,1), (4,1),(5,2), (7,3)\}$.
In general for $d<k/2$, the following best known upper bound is due to K\"uhn, Osthus and Townsend \cite[Theorem 1.2]{KOTo}, which improves earlier results by H\`an, Person and Schacht \cite{HPS}, and Markstr\"om and Ruci\'nski \cite{MaRu}.

\begin{theorem}\cite{KOTo}\label{thm:KOTo}
Let $n$, $1\le d<k/2$ be such that $n, k, d, n/k\in \mathbb{N}$. Then
\[
m_d(k,n) \le \left( \frac{k-d}k - \frac{k-d-1}{k^{k-d}} + o(1) \right) \binom{n-d}{k-d}.
\]
\end{theorem}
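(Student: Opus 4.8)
The plan is to prove the bound by the absorbing method, which reduces it to two statements. The first is an \emph{absorbing lemma}: if $\delta_d(H)\ge(\tfrac12+o(1))\binom{n-d}{k-d}$, then $H$ has a matching $M_0$ with $|V(M_0)|\le\e n$ such that $H[V(M_0)\cup W]$ has a perfect matching for every $W\subseteq V(H)\setminus V(M_0)$ with $|W|\le\mu n$ and $k\mid|W|$, where $0<\mu\ll\e\ll1$. This is the standard argument: for a fixed $k$-set $X$ one shows, by a short double count using $\delta_d(H)>\tfrac12\binom{n-d}{k-d}$, that $X$ has $\Omega(n^{k})$ \emph{absorbers} (roughly, pairs of disjoint edges whose union, after deleting $X$, is again an edge), where the constant $\tfrac12$ is forced by a parity-type obstruction and is exactly what guarantees enough absorbers; then a random sub-family of absorbers of size $o(n)$, greedily pruned to a matching, has the required absorbing property with high probability. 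The second is an \emph{almost perfect matching lemma}: under the hypothesis of the theorem, $H$ has a matching covering all but at most $\mu n$ vertices. Granting both, the theorem follows: extract $M_0$; apply the almost perfect matching lemma to $H-V(M_0)$, whose minimum $d$-degree is still at least $\delta_d(H)-2k\e\binom{n-d}{k-d}$ and hence above the required threshold once $\e$ is small relative to the $o(1)$ term; this yields a matching missing a set $W$ with $|W|\le\mu n$ and $k\mid|W|$ (since $k\mid n$); finally absorb $W$ into $M_0$ and take the union. The two thresholds are compatible because $\tfrac12<\tfrac{k-d}{k}-\tfrac{k-d-1}{k^{k-d}}$ for all $1\le d<k/2$: here $\tfrac{k-d}{k}>\tfrac12$ as $k-d>k/2$, while $\tfrac{k-d-1}{k^{k-d}}\le\tfrac{k-d-1}{k^{2}}<\tfrac{k-2d}{2k}=\tfrac{k-d}{k}-\tfrac12$, the last inequality reducing to $2k-2<k^{2}-2d(k-1)$, which holds since $2d\le k-1$.

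The crux is the almost perfect matching lemma, and here I would follow \cite{KOTo} and pass through fractional matchings. Recall a \emph{fractional perfect matching} is a weighting $w\colon E(H)\to[0,1]$ with $\sum_{e\ni v}w(e)=1$ for every $v$, and that it exists if and only if the fractional matching number $\nu^{*}(H)$ (the maximum of $\sum_{e}w(e)$ over weightings with $\sum_{e\ni v}w(e)\le1$) equals $n/k$. The lemma splits into two parts. Part (a), where the constant enters, asserts that the hypothesis forces a fractional perfect matching. By LP duality this reduces to ruling out a fractional vertex cover $c\colon V(H)\to[0,1]$ with $\sum_{v}c(v)<n/k$; assume one exists and let $S$ be a set of $d$ vertices of smallest $c$-value, so $c(S):=\sum_{v\in S}c(v)\le\tfrac{d}{n}\sum_{v}c(v)<\tfrac{d}{k}$. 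Then every edge $e\supseteq S$ has $\sum_{v\in e\setminus S}c(v)\ge1-c(S)>\tfrac{k-d}{k}$, so $\deg_{H}(S)$ is at most the number of $(k-d)$-subsets $T$ of $V(H)\setminus S$ with $\sum_{v\in T}c(v)>\tfrac{k-d}{k}$. The key point is that $c$ is not an arbitrary weighting but a \emph{valid} cover: an arbitrary weighting of total mass $n/k$ could make almost all $(k-d)$-sets heavy, whereas a valid cover of a hypergraph with no large independent set --- which the degree hypothesis forbids --- cannot be so concentrated. Combining validity, i.e.\ $\sum_{v\in e}c(v)\ge1$ summed over suitable families of edges, with the sparsity $\sum_{v}c(v)<n/k$ yields $\deg_{H}(S)\le(\tfrac{k-d}{k}-\tfrac{k-d-1}{k^{k-d}}+o(1))\binom{n-d}{k-d}$, contradicting the hypothesis. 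Part (b) is purely structural and loses only $o(n)$ vertices: a $k$-graph with a fractional perfect matching and $\delta_d(H)=\Omega(n^{k-d})$ has an integer matching covering all but $o(n)$ vertices, proved via the weak regularity lemma of R\"odl, Ruci\'nski and Szemer\'edi --- the reduced hypergraph inherits a fractional perfect matching, and each of its edges of positive weight corresponds to a dense $k$-partite configuration in $H$ containing a near-perfect matching.

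The main obstacle is the estimate in Part (a) --- extracting the precise constant $\tfrac{k-d}{k}-\tfrac{k-d-1}{k^{k-d}}$ from the interplay of validity and sparsity of $c$. Two naive attempts break down. Ignoring validity is hopeless, as already noted. And working instead with a maximum matching $M$ directly, using only that its uncovered set $U$ spans no edge, gives merely $\delta_d(H)\le(1-\mu^{k-d}+o(1))\binom{n-d}{k-d}$ whenever $|U|\ge\mu n$, so forcing $|U|=o(n)$ this way would require $\delta_d(H)$ to be within $o(1)$ of $\binom{n-d}{k-d}$. One must use the cover inequality across many edges simultaneously, equivalently --- in the matching picture --- build, for each $j$ up to $k-d$, a configuration of $j+1$ disjoint edges meeting only $U$ and $j$ edges of $M$ and thereby contradicting the maximality of $M$; the stated constant is precisely what optimising this trade-off produces. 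This is exactly the step in which the present paper improves on \cite{KOTo}, by replacing the bound used there with a sharper Erd\H{o}s-matching-type estimate and converting it back via Part (b). A final point to verify is that Part (b) genuinely costs only $o(n)$ vertices under the given hypothesis, which is why the density supplied by the degree condition, and not merely the existence of a fractional perfect matching, is invoked there.
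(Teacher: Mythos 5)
This is a cited result: the paper does not prove Theorem~\ref{thm:KOTo} but invokes it from \cite{KOTo}, so there is no in-paper proof to compare against directly. What the paper does do, in Section~3, is reproduce the KOTo machinery for the analogous (sharper) Theorem~\ref{thm:frac_mat}, and that route differs from yours: rather than proving a bespoke absorbing lemma, an LP-duality estimate, and a regularity-based fractional-to-integral conversion, the paper (following KOTo) reduces the minimum $d$-degree fractional matching problem to a minimum $0$-degree (edge count) problem in a $(k-d)$-graph via KOTo's Proposition~4.1, bounds that by an Erd\H{o}s-matching-type extremal set theory result, massages the size via Proposition~2.3, and then hands the entire ``fractional $\Rightarrow$ perfect'' conversion to Treglown--Zhao's Theorem~\ref{thm:TrZh} as a black box. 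Your overall architecture --- absorbing lemma requiring $\delta_d\ge(\tfrac12+o(1))\binom{n-d}{k-d}$, an almost-perfect matching via a fractional perfect matching, and the observation that $\tfrac12<\tfrac{k-d}{k}-\tfrac{k-d-1}{k^{k-d}}$ so the two thresholds are compatible --- is sound in outline, and your compatibility computation is correct.

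The genuine gap is in Part (a), which is where the constant is supposed to come from and which you never actually carry out. Picking $S$ to be the $d$ vertices of smallest cover-weight gives $c(S)<d/k$ and reduces $\deg_H(S)$ to counting $(k-d)$-sets $T\subseteq V\setminus S$ with $c(T)>\tfrac{k-d}{k}$; but since $c$ restricted to $V\setminus S$ has average weight $\approx\tfrac1k$, the expected weight of such a $T$ is already $\approx\tfrac{k-d}{k}$, and without a concrete extremal argument on the distribution of $c$ one cannot conclude that the count is at most $(\tfrac{k-d}{k}-\tfrac{k-d-1}{k^{k-d}}+o(1))\binom{n-d}{k-d}$ --- you acknowledge this yourself and then assert that ``the stated constant is precisely what optimising this trade-off produces'' without exhibiting the optimisation. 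The $k^{k-d}$ in the denominator is the signature of the reduction to the Erd\H{o}s matching problem on $n-d$ vertices in uniformity $k-d$ (the $(1-1/k)^{k-d}$-type term that appears explicitly in the paper's Section~3 computation); your LP-duality sketch does not visibly reproduce that structure. A second, smaller, flagged-but-unresolved gap is the $o(n)$ loss in Part (b): you note that this needs the degree condition and not merely a fractional perfect matching, but you do not supply the argument. As written, the proposal identifies the right skeleton but leaves the step that actually determines the claimed constant unproved.
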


In this paper we show the following new upper bound on $m_d(k,n)$ for $1\le d<k/2$.


\begin{theorem}\label{thm:int_mat}
Let $n, k\ge 3$, $1\le d< k/2$ be integers and $n\in k\mathbb{N}$. Then
\begin{equation*}
m_d(k,n) \le \max\left\{\delta(n,k,d)+1, \left(g(k,d) + o(1) \right) \binom{n-d}{k-d}\right\},
\end{equation*}
where
\[
g(k,d) := 1- \left(1- \frac{(k-d)(k-2d-1)}{(k-1)^2} \right) \left( 1-\frac{1}{k} \right)^{k-d}.
\]
\end{theorem}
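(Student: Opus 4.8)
The plan is to prove the bound by combining three ingredients. The first is a new upper bound for the Erd\H{o}s matching conjecture itself, that is, for $m_0^s(k,n)$ when $s$ is close to $n/k$. The second is the reduction method of K\"uhn, Osthus and Townsend (as in the proof of Theorem~\ref{thm:KOTo}), which converts such an edge bound into a minimum $d$-degree statement about almost perfect matchings. The third is the absorption-and-stability machinery of Treglown and Zhao, which both accounts for the term $\delta(n,k,d)+1$ and disposes of the configurations lying close to the parity extremal family $\h_{\text{ext}}(n,k)$. All of the error terms introduced along the way are of lower order and get absorbed into the single $o(1)$ in the statement.

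The heart of the matter is the first ingredient, which I would phrase in the $d$-degree language actually used: if $\delta_d(H)\ge\bigl(g(k,d)+o(1)\bigr)\binom{n-d}{k-d}$ then $H$ contains a matching covering all but $o(n)$ of its vertices. This rests on an edge bound: for $s=(1-\gamma)n/k$ with $\gamma$ small, a $k$-graph on $n$ vertices with enough edges has a matching of size $s$. To prove the edge bound I would take a putative counterexample, fix a maximum matching $M=\{e_1,\dots,e_t\}$ with $t<s$, and let $U$ be the set of uncovered vertices (so $|U|\ge k$): maximality forbids any edge inside $U$, and then one bounds, for each $e_i$, the number of $(k-1)$-subsets $f$ with $f\cup\{u\}\in H$ for some $u\in U$, since too many such ``useful'' sets would permit an augmenting switch contradicting maximality. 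Summing these local bounds over the edges of $M$ and over $U$, and optimizing the resulting weighting, yields the density ceiling; the factor $1-\tfrac{(k-d)(k-2d-1)}{(k-1)^2}$ in $g(k,d)$ comes out of this optimization, the $(k-1)^2$ in the denominator arising from a double count over ordered pairs of useful $(k-1)$-sets. I would then transfer this to minimum $d$-degree following K\"uhn, Osthus and Townsend: pass to link hypergraphs and take a suitable random slice so that the $d$-degree hypothesis delivers the needed edge density on every relevant piece, and extract an almost perfect matching by a greedy argument with a nibble/absorbing clean-up.

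Given this, the passage to a perfect matching is routine. Since the hypothesis supplies both $\delta_d(H)\ge\delta(n,k,d)+1$ and $\delta_d(H)\ge\bigl(g(k,d)+o(1)\bigr)\binom{n-d}{k-d}$, I would first apply the absorbing lemma in the Treglown--Zhao form: either $H$ contains an absorbing matching $A$ of size $o(n)$ that can swallow any set of $o(n)$ leftover vertices of the appropriate residue, or $H$ lies within $o(n^k)$ edges of a member of $\h_{\text{ext}}(n,k)$. In the first case, deleting $V(A)$ leaves a $k$-graph still satisfying $\delta_d\ge\bigl(g(k,d)+o(1)\bigr)\binom{n-d}{k-d}$ (the degree condition is robust under removing $o(n)$ vertices), hence a matching missing only $o(n)$ vertices; absorbing the leftover into $A$ — divisibility is automatic since every piece has size divisible by $k$ — completes a perfect matching. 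In the second case $H$ is close to an extremal configuration yet has $\delta_d(H)\ge\delta(n,k,d)+1$, and the stability analysis of Treglown and Zhao produces a perfect matching directly; this is exactly where $\delta(n,k,d)+1$, rather than something larger, suffices, and it is what forces the maximum with that term in the statement.

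The main obstacle is the first ingredient. The space-barrier and parity lower bounds are immediate, and the K\"uhn--Osthus--Townsend transfer together with the Treglown--Zhao absorption are by now standard; but squeezing the extra factor $1-\tfrac{(k-d)(k-2d-1)}{(k-1)^2}$ out of the maximum-matching analysis — uniformly over the whole range $1\le d<k/2$, and in a form that is not destroyed by the random-slice step — is delicate. One has to ensure that the augmenting-switch count is not wasteful when $d$ is well below $k/2$, and that every discrepancy introduced by passing to the link hypergraphs and the random slice is genuinely $o(1)$ after dividing by $\binom{n-d}{k-d}$.
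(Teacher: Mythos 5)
Your high-level skeleton matches the paper's: first prove a new upper bound on $m_0^s(k,n)$ (an Erd\H{o}s-matching-conjecture-type inequality), then transfer it to the minimum-$d$-degree setting via the K\"uhn--Osthus--Townsend machinery, and finally invoke Treglown--Zhao to handle the parity extremal term $\delta(n,k,d)+1$. You have also correctly identified the bottleneck. However, your proposal for that bottleneck is where the argument falls apart, and the rest of the decomposition is somewhat heavier than necessary.

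The genuine gap is the edge bound. You propose to prove it by fixing a maximum matching $M$, observing that the uncovered set $U$ spans no edge, counting ``useful'' $(k-1)$-sets, and then ``optimizing the resulting weighting,'' asserting that this produces the factor $1-\tfrac{(k-d)(k-2d-1)}{(k-1)^2}$ with the $(k-1)^2$ ``arising from a double count over ordered pairs of useful $(k-1)$-sets.'' This is a post hoc reading of the target formula, not a derivation, and it does not reflect where the $(k-1)^2$ actually comes from. The paper proves (Theorem~\ref{thm:main}) that if $\nu(\F)=s$ and $n\ge \a k(s+1)+k-1$ then $|\F|\le \binom nk-\binom{n-s}k+\tfrac{(2-\a)k-1}{\a k-1}s\binom{n-s-1}{k-1}$, and it does so by adapting Frankl's proof: assume $\F$ is shifted, partition by $F\cap[s+1]$, use the shadow inequality $s|\partial\F|\ge|\F|$ to control $\F(\emptyset)$ by $\F(\{s+1\})$, and then feed the nested, cross-dependent link families $\F_1,\dots,\F_{s+1}$ into Frankl's cross-dependent inequality (Theorem~\ref{thm:ineq}) with a carefully chosen $\beta=\tfrac{\a k-1}{2k-2}$. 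The coefficient $\tfrac{(2-\a)k-1}{\a k-1}$ is exactly what that inequality yields. One factor of $(k-1)$ then comes from substituting $\a=k/(k-d)$, so $\a k'-1=k-1$ where $k'=k-d$; the second factor of $(k-1)$ appears only later, in the proof of Theorem~\ref{thm:frac_mat}, when $\binom{n'-s-1}{k'-1}$ is converted to a multiple of $\binom{n'-s}{k'}$ and one uses $n'-s=(k-1+o(1))s$. Neither of these has anything to do with ``ordered pairs of useful $(k-1)$-sets,'' and the maximum-matching/augmenting-switch heuristic you sketch gives no route to the sharp constant $\tfrac{(2-\a)k-1}{\a k-1}$. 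Without the shifting plus shadow plus cross-dependent-family combination (or an equally precise substitute), the key numerical gain over Theorem~\ref{thm:KOTo} is not obtained.

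Secondarily, your ingredients 2 and 3 re-derive more than needed. The paper does not transfer the edge bound to an ``almost perfect matching covering all but $o(n)$ vertices'' and then re-run absorption and stability. Instead it transfers to the \emph{fractional} perfect matching threshold (using Propositions 4.1 and 2.3 of K\"uhn--Osthus--Townsend to bound $f_d^{n/k}(k,n)$, hence $c^*_{k,d}\le g(k,d)$), and then applies Treglown--Zhao's Theorem~\ref{thm:TrZh}, $m_d(k,n)=\max\{\delta(n,k,d)+1,\,(c^*_{k,d}+o(1))\binom{n-d}{k-d}\}$, as a black box. That identity already packages the absorption and stability analysis, including the $\delta(n,k,d)+1$ term, so there is no need to reopen it. Your version is not wrong in outline, but it is re-proving known machinery and it obscures the clean reduction to a statement about $c^*_{k,d}$.
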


Here we compare the bounds in Theorems~\ref{thm:KOTo} and \ref{thm:int_mat}.
First consider the case $d=xk$ for some fixed $x\in (0,1/2)$.
Let $g(x) := \lim_{k\rightarrow \infty} g(k,x k)$ and clearly $g(x) = 1- \left(3x-2x^2 \right) e^{x-1}$.
Straightforward application of Calculus shows that $g(x) \le 1 - \frac{3}e x \approx 1 - 1.1x$.
Note that when $d=x k$ and $k$ tends to infinity, the corresponding coefficient in the bound of Theorem~\ref{thm:KOTo} becomes $1-x$.
So in this range, when $k$ is sufficiently large, our bound is better than that of Theorem~\ref{thm:KOTo}.
Second, by simply plugging in values of $k, d$, one can see that the bound in Theorem~\ref{thm:KOTo} is better for small values of $k$ or when $d$ is much smaller than $k$.

Theorem~\ref{thm:int_mat} also provides some new exact values of $m_d(k,n)$.

\begin{corollary}\label{cor:mdk}
Given $1\le d<k/2$, let $n\in k\mathbb{N}$ be sufficiently large. Then $m_d(k,n) =\delta(n,k,d)+1$ if $0.42k \le d < k/2$ or $(k,d)\in \{(12,5), (17,7)\}$.
\end{corollary}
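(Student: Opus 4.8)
The plan is to read the corollary off Theorem~\ref{thm:int_mat} together with the parity lower bound, so that the only substantive task is to verify that $g(k,d)<\tfrac12$ in each of the listed cases. For the lower bound: since no hypergraph in $\h_{\text{ext}}(n,k)$ has a perfect matching and $\delta(n,k,d)$ is by definition the largest minimum $d$-degree occurring in that family, there is an $n$-vertex $k$-graph with minimum $d$-degree exactly $\delta(n,k,d)$ and no perfect matching; hence $m_d(k,n)\ge\delta(n,k,d)+1$ for every admissible $n$.

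For the matching upper bound, Theorem~\ref{thm:int_mat} gives $m_d(k,n)\le\max\{\delta(n,k,d)+1,\,(g(k,d)+o(1))\binom{n-d}{k-d}\}$, while $\delta(n,k,d)=(\tfrac12+o(1))\binom{n-d}{k-d}$. So if $g(k,d)\le\tfrac12-c$ with $c=c(k,d)>0$, then for all sufficiently large $n$ the second entry of the maximum is at most $(\tfrac12-\tfrac c2)\binom{n-d}{k-d}<\delta(n,k,d)$, the maximum equals $\delta(n,k,d)+1$, and combining with the lower bound yields $m_d(k,n)=\delta(n,k,d)+1$. (This is exactly the point at which the exactness built into Theorem~\ref{thm:int_mat} through the Treglown--Zhao reduction is being used, namely the $\delta(n,k,d)+1$ branch of the maximum.) Hence everything reduces to the elementary claim that $g(k,d)<\tfrac12$ whenever $0.42k\le d<k/2$ or $(k,d)\in\{(12,5),(17,7)\}$.

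To establish this claim I would first note that for fixed $k$ the function $d\mapsto g(k,d)=1-\bigl(1-\tfrac{(k-d)(k-2d-1)}{(k-1)^2}\bigr)\bigl(1-\tfrac1k\bigr)^{k-d}$ is nonincreasing on $1\le d<k/2$: the factor $(1-\tfrac1k)^{k-d}$ is positive and increasing in $d$, and $1-\tfrac{(k-d)(k-2d-1)}{(k-1)^2}$ is positive and increasing in $d$ there (the quadratic $(k-d)(k-2d-1)$ has $d$-derivative $4d+1-3k<0$ for $d<k/2$), so the product increases and $g$ decreases. Thus for the range $0.42k\le d<k/2$ it suffices to check $g(k,\lceil 0.42k\rceil)<\tfrac12$. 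Taking $k\to\infty$ gives the limiting function $g(x):=\lim_{k\to\infty}g(k,xk)=1-(3x-2x^2)e^{x-1}$ with $g'(x)=-(2x+3)(1-x)e^{x-1}<0$ on $(0,1)$, so $g$ is strictly decreasing and $g(x)\le g(0.42)<\tfrac12$ for $x\in[0.42,\tfrac12)$; bounding the $O(1/k)$ discrepancy between $g(k,d)$ and $g(d/k)$ then gives $g(k,\lceil 0.42k\rceil)<\tfrac12$ for all large $k$, and the finitely many remaining small values of $k$ (those with $\lceil 0.42k\rceil<k/2$) are checked by direct substitution. Finally the two sporadic pairs, which satisfy $d/k<0.42$ and therefore lie outside that range, are handled by evaluating the formula directly: $g(12,5)=1-\tfrac{114}{121}\bigl(\tfrac{11}{12}\bigr)^{7}<\tfrac12$ and $g(17,7)=1-\tfrac{59}{64}\bigl(\tfrac{16}{17}\bigr)^{10}<\tfrac12$.

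The main obstacle is purely numerical rather than conceptual: $g(0.42)$ sits only a little below $\tfrac12$, and $g(17,7)$ is extremely close to $\tfrac12$ (indeed $g(7/17)>\tfrac12$, so the continuous approximation already fails at that ratio), so one must control the error between $g(k,d)$ and its limit $g(d/k)$ carefully, and the borderline small-$k$ values together with the two sporadic pairs must be verified by exact computation rather than asymptotically. Apart from that the corollary is just a repackaging of Theorem~\ref{thm:int_mat} with the parity construction.
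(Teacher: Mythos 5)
Your reduction is exactly the paper's: combine the upper bound of Theorem~\ref{thm:int_mat} with the parity lower bound $m_d(k,n)\ge\delta(n,k,d)+1$ and the fact $\delta(n,k,d)=(\tfrac12+o(1))\binom{n-d}{k-d}$, so everything comes down to the numerical inequality $g(k,d)<\tfrac12$. The sporadic evaluations of $g(12,5)$ and $g(17,7)$ are correct, as is the observation that the limiting function $g(x)=1-(3x-2x^2)e^{x-1}$ is strictly decreasing, and your derivative computation is right.

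Where you and the paper part ways is in how the asymptotic range $0.42k\le d<k/2$ is handled. You first prove (correctly) that $d\mapsto g(k,d)$ is decreasing, reduce to $d=\lceil 0.42k\rceil$, and then invoke an unproved ``$O(1/k)$ discrepancy'' between $g(k,d)$ and $g(d/k)$ to transfer the limit bound $g(0.42)<\tfrac12$ to finite $k$. That step is the real work and you left it as a gap; it is not a free move, because (as you yourself point out) the margin is thin and the continuous approximation already fails at $d/k=7/17$, and moreover it is not even clear a priori whether $g(k,xk)\le g(x)$ or $g(k,xk)\ge g(x)$, so ``the error is $O(1/k)$'' is not by itself a proof. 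The paper avoids the error analysis entirely: it replaces $\tfrac{k-2d-1}{(k-1)^2}$ by the larger $\tfrac{k-2d}{k^2}$ (valid since $d>k/4$), obtaining $g(k,xk)\le h(k,x):=1-(3x-2x^2)(1-\tfrac1k)^{(1-x)k}$, and then observes that $h$ is monotone decreasing in both $k$ and $x$, so the whole range $k\ge20$, $x\ge0.42$ collapses to the single check $h(20,0.42)<\tfrac12$, with $k\le20$ done by direct computation. So your strategy is the same but your bookkeeping is not tight; if you replace the hand-waved error estimate with an explicit monotone majorant of $g(k,xk)$ (the paper's $h$ works), the argument closes. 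Your monotonicity of $g(k,\cdot)$ in $d$ is a clean observation the paper doesn't state, though it ends up not being needed once one has monotonicity of $h$ in $x$.
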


\begin{proof}
For all cases, since $n$ is sufficiently large, by Theorem~\ref{thm:int_mat}, it suffices to show $g(k, d)<1/2$.
The cases when $k\le 20$ can be verified by hand.
For $k\ge 20$, let $d=x k$ for some $x\in (1/4,1/2)$. 
Note that $\frac{k-2d-1}{(k-1)^2}< \frac{k-2d}{k^2}$, then by definition, we have
\begin{align*}
g(k, x k) \le 1- \left(1-(1-x)(1-2x) \right) \left( 1-\frac{1}{k} \right)^{(1-x)k} = 1- \left(3x-2x^2 \right) \left( 1-\frac{1}{k} \right)^{(1-x)k}.
\end{align*}
Let $h(k, x): = 1- \left(3x-2x^2 \right) \left( 1-\frac{1}{k} \right)^{(1-x)k}$ and note that for $x\in (1/4, 1/2)$ and $k\ge 2$, $h(k,x)$ is decreasing on $x$ and $k$, respectively.
So we are done by noticing that $h(20, 0.42)<1/2$.
\end{proof}

\subsection{Perfect fractional matchings in hypergraphs}

As shown in \cite{AFHRRS, KOTo, TrZh15}, to get upper bounds on $m_d(k,n)$, it suffices to study so-called perfect fractional matchings.
A \emph{fractional matching} in a $k$-graph $H=(V, E)$ is a function $w:E\rightarrow [0,1]$, such that for each $v\in V$ we have $\sum_{e\in E:v\in e}w(v)\le 1$.
The \emph{size} of $w$ is $\sum_{e\in E} w(e)$ and we say $w$ is a \emph{perfect fractional matching} if it has size $|V|/k$.
For $s\in \mathbb{R}$, let $f_d^s(k,n)$ denote the smallest integer $m$ such that every $n$-vertex $k$-graph $H$ with $\delta_d(H)\ge m$ has a fractional matching of size $s$.
Note that the $k$-graph $G(n/k-1)$ shows that $f_{d}^{n/k}(k,n) \ge \left(1- \left(1- \frac{1}{k} \right)^{k-d} +o(1) \right)\binom{n-d}{k-d}$.
The equality was shown for $d=k-1$ in \cite{RRS09} (in fact, it is shown that $f_{k-1}^{n/k}(k,n) = \lceil n/k \rceil$), and for $k/2\le d< k-1$ in \cite{KOTo}.

As the key component of the proof of Theorem~\ref{thm:int_mat}, we show the following upper bound on $f_d^{n/k}(k,n)$ for $1\le d<k/2$.
Let $c_{k,d}^*:=\lim \sup_{n\rightarrow \infty} f_d^{n/k}(k,n)/\binom{n-d}{k-d}$.

\begin{theorem}\label{thm:frac_mat}
Let $n, k\ge 3$, $1\le d< k/2$ be integers. Then
\begin{equation*}
f_d^{n/k}(k,n) \le \left( g(k,d) + o(1) \right) \binom{n-d}{k-d},
\end{equation*}
or, equivalently, $c_{k,d}^* \le g(k,d)$.
\end{theorem}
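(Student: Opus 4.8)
The plan is to argue by linear programming duality. Write $c(X):=\sum_{v\in X}c(v)$ for a weight function $c$. By strong LP duality, a $k$-graph $H$ on $n$ vertices fails to have a perfect fractional matching precisely when it admits a fractional vertex cover $c\colon V(H)\to[0,\infty)$ (meaning $c(e)\ge 1$ for every edge $e$) with $c(V(H))<n/k$. Replacing each $c(v)$ by $\min\{c(v),1\}$ keeps $c$ a cover and does not increase its total weight, so we may assume $c\colon V(H)\to[0,1]$; and by a routine compactness/discretization argument we may further assume $c$ takes boundedly many distinct values, reducing what follows to a finite optimization. Suppose then, for contradiction, that $\delta_d(H)\ge\bigl(g(k,d)+o(1)\bigr)\binom{n-d}{k-d}$ while such a cover $c$ exists.

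The second step extracts a light $d$-set. Order $V(H)=\{v_1,\dots,v_n\}$ with $c(v_1)\ge\cdots\ge c(v_n)$, set $S=\{v_{n-d+1},\dots,v_n\}$, $m=n-d$ and $\beta=c(S)$. Since the vertices of $S$ are the lightest, $\beta\le d\cdot\min_{v\in V\setminus S}c(v)\le\frac{d}{m}\,c(V\setminus S)=\frac{d}{m}\bigl(c(V(H))-\beta\bigr)$, and as $c(V(H))<n/k$ this rearranges to $\beta<d/k$. Every edge $e$ with $S\subseteq e$ satisfies $c(e\setminus S)\ge 1-\beta$ with $e\setminus S\in\binom{V\setminus S}{k-d}$, so
\[
\delta_d(H)\le\deg_H(S)\le\#\bigl\{T\in\tbinom{V\setminus S}{k-d}:\ c(T)\ge 1-\beta\bigr\}.
\]
Thus it suffices to prove the following weighted counting bound: if $w_1,\dots,w_m\in[0,1]$ and $\beta\in[0,1]$ satisfy $\beta\le d\min_i w_i$ and $\beta+\sum_i w_i<(m+d)/k$, then the number of $(k-d)$-subsets $T\subseteq[m]$ with $\sum_{i\in T}w_i\ge 1-\beta$ is at most $\bigl(g(k,d)+o(1)\bigr)\binom{m}{k-d}$.

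I would deduce this counting bound from an upper bound for the Erd\H{o}s matching conjecture, which is the role flagged in the abstract. Regard the ``heavy'' $(k-d)$-subsets (those of weight $\ge 1-\beta$) as a $(k-d)$-graph on the $m$-set; its matching number is at most $\frac{\sum_i w_i}{1-\beta}$, hence less than $\frac{(m+d)/k-\beta}{1-\beta}$, because pairwise disjoint heavy sets consume disjoint blocks of weight — and as $\beta\uparrow d/k$ this is about $m/(k-d)$. So an upper bound on the maximum number of edges of a uniform hypergraph of prescribed matching number — one that must improve the classical bounds in exactly this large-matching regime, which lies outside the range where the conjecture is known — bounds the count; the passage from such a hypergraph bound back to the minimum $d$-degree statement is precisely the reduction of \cite{KOTo}. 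One then maximises the resulting quantity over $\beta\in[0,d/k)$ and over the shape of $c$. Two features interact: pushing $\beta$ toward $d/k$ enlarges the relevant matching number but simultaneously pins the weights on $V\setminus S$ near $1/k$ (since $\min_i w_i\ge\beta/d$ while the average is forced below $1/k$), which makes heavy sets scarce, vanishing in the uniform limit; the extremal configuration is therefore near-uniform with a small, carefully tuned amount of variation, and optimising the second-order contribution of that variation is what produces the exact constant $g(k,d)$ — in particular the factor $1-\frac{(k-d)(k-2d-1)}{(k-1)^2}$ in front of $(1-1/k)^{k-d}$.

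The main obstacle is the Erd\H{o}s matching bound needed here, in a parameter window where the conjecture is not known exactly, together with the ensuing optimisation, which must be carried out sharply enough to land on $g(k,d)$ rather than some weaker admissible constant. Secondary care is required for the lower-order terms — arising from the discretisation of $c$, from $\beta$ being only approximately $d/k$, and from $m=n-d$ versus $n$ — and for verifying that reducing $c$ to boundedly many weight values loses nothing.
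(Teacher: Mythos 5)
Your outline correctly identifies the overall architecture of the paper's argument: reduce the minimum $d$-degree condition on fractional matchings to a $d=0$ question about families with bounded matching number, and then invoke an improved upper bound for the Erd\H{o}s matching conjecture. Your first two steps (LP duality plus the extraction of a light $d$-set $S$ with $c(S)<d/k$, giving $\delta_d(H)\le\#\{T\in\binom{V\setminus S}{k-d}:c(T)\ge 1-\beta\}$) are essentially the internal content of Proposition~\ref{prop:41} of \cite{KOTo}, which the paper uses as a black box; re-deriving it is fine. The paper then applies Proposition~\ref{prop:23} of \cite{KOTo} to sidestep any integer-vs-fractional matching size mismatch, and feeds the result into Theorem~\ref{thm:main}.

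The gap is what comes next. You defer the entire decisive step — the Erd\H{o}s matching bound — to ``an upper bound ... which must improve the classical bounds in exactly this large-matching regime.'' That bound is precisely the paper's Theorem~\ref{thm:main}, which is the genuinely new ingredient here; it is proved by adapting Frankl's shift/shadow argument (Theorems~\ref{thm:shadow} and \ref{thm:ineq}), and without a concrete statement of it your argument cannot produce any explicit constant, let alone $g(k,d)$. Moreover, your final paragraph mis-attributes the origin of the constant. You propose to optimise over $\beta\in[0,d/k)$ and over near-uniform weight profiles with ``a small, carefully tuned amount of variation,'' predicting that this second-order optimisation yields $g(k,d)$. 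This is not what happens: the paper avoids any such optimisation by fixing $\alpha=k/(k-d)$ and $s+1=\lfloor(n'-k'+1)/k\rfloor$ and applying Theorem~\ref{thm:main} once; the factor $1-\frac{(k-d)(k-2d-1)}{(k-1)^2}$ is an artefact of the slack term $\frac{(2-\alpha)k-1}{\alpha k-1}s\binom{n-s-1}{k-1}$ in Frankl's method (note $g(k,d)$ exceeds the conjectured truth $1-(1-1/k)^{k-d}$, so it does not come from any extremal cover). As written, the proposal therefore identifies the right strategy but leaves the load-bearing lemma unstated and unproved, and the closing speculation about how to finish would lead you away from the actual argument.
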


Now Theorem~\ref{thm:int_mat} immediately follows from Theorem~\ref{thm:frac_mat} and the following theorem of Treglown and Zhao \cite[Theorem 2]{TrZh15}. 
\begin{theorem}\cite{TrZh15}\label{thm:TrZh}
Fix integers $k,d$ with $d\le k-1$ and let $n\in k\mathbb{N}$. Then
\[
m_d(k,n) = \max \left\{ \delta(n,k,d)+1, (c^*_{k,d}+o(1))\binom{n-d}{k-d} \right\}.
\]
\end{theorem}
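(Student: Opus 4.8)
The plan is to deduce Theorem~\ref{thm:int_mat} directly from the two results already recorded above: the exact formula of Treglown and Zhao (Theorem~\ref{thm:TrZh}) together with the fractional bound of Theorem~\ref{thm:frac_mat}. Indeed, the hypotheses $k\ge 3$, $1\le d<k/2$, $n\in k\mathbb{N}$ are exactly those of both statements, so Theorem~\ref{thm:TrZh} gives
\[
m_d(k,n) = \max\left\{ \delta(n,k,d)+1,\ \left(c^*_{k,d}+o(1)\right)\binom{n-d}{k-d} \right\},
\]
while Theorem~\ref{thm:frac_mat} gives $c^*_{k,d}\le g(k,d)$, hence $\left(c^*_{k,d}+o(1)\right)\binom{n-d}{k-d}\le \left(g(k,d)+o(1)\right)\binom{n-d}{k-d}$. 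Since $x\mapsto\max\{a,x\}$ is nondecreasing, combining the two displays yields
\[
m_d(k,n) \le \max\left\{ \delta(n,k,d)+1,\ \left(g(k,d)+o(1)\right)\binom{n-d}{k-d} \right\},
\]
which is exactly the asserted bound. At the level of Theorem~\ref{thm:int_mat} itself there is therefore nothing beyond this monotonicity step; the two terms of the maximum are precisely the two lower-bound regimes already identified, namely the parity construction giving $\delta(n,k,d)+1=(1/2+o(1))\binom{n-d}{k-d}$ (the hypergraphs in $\h_{\text{ext}}(n,k)$) and the $G(s)$-type construction, whose coefficient is $g(k,d)$.

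Consequently the substance is entirely in Theorem~\ref{thm:frac_mat}, i.e.\ in showing $f_d^{n/k}(k,n)\le (g(k,d)+o(1))\binom{n-d}{k-d}$ for $1\le d<k/2$. The approach I would take there is the usual route for perfect fractional matchings in dense hypergraphs: given an $n$-vertex $k$-graph $H$ whose minimum $d$-degree is just above the claimed threshold but which has no perfect fractional matching, LP duality produces a fractional vertex cover of total weight $<n/k$, equivalently a maximum fractional matching leaving a positive-density set $W$ of vertices unsaturated. Using that no edge lies inside $W$, and applying $\delta_d(H)$ to carefully chosen $d$-sets (both inside $W$ and meeting $W$), one reduces everything to a single extremal question: how small can the $d$-degree of $H$ be while still preventing a perfect fractional matching? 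Carrying out that optimization, the extremal configuration turns out to be, up to $o(1)$, a blow-up of the ``intersect a fixed $s$-set'' hypergraph $G(s)$ in an optimally chosen fraction of the $k$ coordinates, and this optimal fraction is what produces the coefficient $g(k,d)$.

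The main obstacle is precisely this extremal optimization inside Theorem~\ref{thm:frac_mat}: one must track a $d$-set's degree through the fractional-cover and link analysis and prove that keeping it below $(g(k,d)+o(1))\binom{n-d}{k-d}$ forces, up to $o(1)$, the configuration that attains $g(k,d)$, so that no hypergraph can simultaneously have a larger minimum $d$-degree and admit no perfect fractional matching. Once that estimate is in hand, the passage to Theorem~\ref{thm:int_mat} through Theorem~\ref{thm:TrZh} is, as explained above, the routine monotonicity argument.
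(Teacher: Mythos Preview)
Your deduction of Theorem~\ref{thm:int_mat} from Theorems~\ref{thm:TrZh} and~\ref{thm:frac_mat} is correct and is exactly the paper's one-line argument (stated just before Theorem~\ref{thm:TrZh}).

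Your sketch for Theorem~\ref{thm:frac_mat}, however, is not the paper's route. The paper does \emph{not} argue via LP duality and fractional covers. Instead it first proves an upper bound toward the Erd\H{o}s matching conjecture (Theorem~\ref{thm:main}), which bounds $m_0^{s+1}(k',n')$ for $k'=k-d$, $n'=n-d$; it then quotes two propositions of K\"uhn, Osthus and Townsend (Propositions~\ref{prop:41} and~\ref{prop:23}) to pass from $m_0^{s+1}(k-d,n-d)$ to $f_d^{n/k}(k,n)$. The coefficient $g(k,d)$ falls out of evaluating the bound of Theorem~\ref{thm:main} at $\alpha=k/(k-d)$ and $s+1\approx n'/k$, not from any optimization over fractional-cover configurations.

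There is also a factual slip in your proposal: you say the $G(s)$-type construction has coefficient $g(k,d)$. It does not. The hypergraph $G(n/k-1)$ gives minimum $d$-degree $(1-(1-1/k)^{k-d}+o(1))\binom{n-d}{k-d}$, which is strictly smaller than $g(k,d)$ for $d<(k-1)/2$; $g(k,d)$ is the \emph{upper bound} established in Theorem~\ref{thm:frac_mat}, not a lower bound coming from a construction. Consequently your assertion that the extremal configuration in the LP optimization ``turns out to be'' a $G(s)$-blowup producing $g(k,d)$ cannot be correct as stated, and your third paragraph does not constitute a proof plan for Theorem~\ref{thm:frac_mat}.
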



\subsection{The Erd\H{o}s Matching Conjecture}
The following classical conjecture is due to Erd\H{o}s \cite{erdos65} in 1965.
Here we prefer the notation from Extremal Set Theory, where a $k$-uniform family $\F\subseteq \binom{[n]}{k}$ is a collection of $k$-subsets of $[n]$ (so it is a $k$-graph).
Given a family $\F$, $\nu(\F)$ is the size of the maximum matching in $\F$.

\begin{conjecture}\cite{erdos65}\label{conj:erdos}
If $\F\subseteq \binom{[n]}{k}$ and $\nu(\F) = s$ such that $n\ge k(s + 1) - 1$ then
\[
|\F|\le \max \left\{ \binom{k(s + 1) - 1}{k}, \binom nk - \binom{n-s}k \right\}
\]
holds.
\end{conjecture}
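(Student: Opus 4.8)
\medskip
\noindent\textbf{Proof proposal for Conjecture~\ref{conj:erdos}.}

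The plan is to follow the compression (shifting) tradition: reduce to a shifted family, then split the analysis according to which of the two quantities $\binom{k(s+1)-1}{k}$ and $\binom nk-\binom{n-s}k$ dominates, noting that as $n$ grows from $k(s+1)-1$ the maximum switches exactly once, at a crossover value $n_0=n_0(k,s)$. I would first dispose of $k=2$ (the Erd\H{o}s--Gallai theorem on matchings) and then reduce to a \emph{shifted} family. Recall that the shift $S_{ij}$ ($i<j$) replaces each $F\in\F$ with $j\in F$, $i\notin F$ and $(F\setminus\{j\})\cup\{i\}\notin\F$ by $(F\setminus\{j\})\cup\{i\}$; this preserves $|\F|$ and does not increase $\nu(\F)$, so iterating to stability gives a shifted $\F'$ with $|\F'|=|\F|$ and $\nu(\F')\le s$. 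If $\nu(\F')<s$ the induction hypothesis with parameter $s-1$ already yields the claim, since both extremal quantities are nondecreasing in $s$; so we may assume $\F$ is shifted with $\nu(\F)=s$.

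Next I would use the structure of shifted families with bounded matching number. The basic device is the link decomposition at the coordinate $1$: with $\F_{\bar1}=\{F\in\F:1\notin F\}\subseteq\binom{[2,n]}k$ and $\F_1=\{F\setminus\{1\}:1\in F\in\F\}\subseteq\binom{[2,n]}{k-1}$ one has $|\F|=|\F_{\bar1}|+|\F_1|$, both parts are again shifted, $\nu(\F_{\bar1})\le s$, and shiftedness forces the shadow containment $\partial\F_{\bar1}\subseteq\F_1$. The clique $\binom{[k(s+1)-1]}k$ and the cover $\{F:F\cap[s]\neq\emptyset\}$ — and, what matters here, the shifted families of the same sizes obtained by compressing them — decompose into the corresponding extremal configurations for smaller parameters, so the target bound is consistent with an induction on $s$ in which $\F_{\bar1}$ is estimated by the hypothesis for $(k,s)$ on $n-1$ vertices and $\F_1$ by an estimate for a shifted $(k-1)$-graph which, combined with $\partial\F_{\bar1}\subseteq\F_1$, still forbids a matching of size $s+1$ in $\F$. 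Iterating this down all coordinates reproduces the known structural description of shifted families with $\nu\le s$ in terms of the positions of the elements of each edge, and in the two extreme regimes the bound can then be read off.

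The regime split is what makes the recursion usable. For $n\ge n_0$ the target is $|\F|\le\binom nk-\binom{n-s}k$; the natural routes are either Frankl's shifting argument — a shifted $\F$ with $\nu(\F)=s$ and $n$ comfortably above $ks$ must be an $s$-cover — sharpened so that its threshold reaches $n_0$, or the Huang--Loh--Sudakov random-permutation argument, in which one takes a uniformly random cyclic order of $[n]$ and bounds the expected number of edges of $\F$ lying in a short window, again sharpened to reach $n_0$ rather than a constant multiple of $ks$. For $k(s+1)-1\le n\le n_0$ the target is $|\F|\le\binom{k(s+1)-1}k$; the base case $n=k(s+1)-1$ is the trivial estimate $|\F|\le\binom nk=\binom{k(s+1)-1}k$, and for larger $n$ in this window I would induct downward on $n$ by deleting the largest vertex, using that for a shifted family the link at $n$ lies in the shadow of the deletion at $n$, a Kruskal--Katona bound on that shadow, and the fact that $\binom{k(s+1)-1}k$ does not depend on $n$. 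Finally the two halves must be checked to meet at $n=n_0$, which requires a short case analysis near the crossover since $n_0$ need not be an integer and the two binomial expressions are not simultaneously attained there.

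The main obstacle is the intermediate range, roughly $k(s+1)\le n\le c\cdot ks$ for a moderate constant $c$, where neither construction dominates by a wide margin and a near-extremal $\F$ need resemble neither. There the link decomposition is lossy: $\partial\F_{\bar1}\subseteq\F_1$ and $\nu(\F_{\bar1})\le s$ do not force $\F_1$ to be close to an $(s-1)$-cover, so the induction on $s$ closes only up to a multiplicative error instead of with the equality one needs, and likewise the windowing argument controls only the average window, not the worst case. Turning either line into a quantitatively tight statement — equivalently, classifying the shifted $(k-1)$-graphs that can arise as the link of a shifted $k$-graph with $\nu=s$ when $n$ is only a little above $k(s+1)$ — is exactly the content that has kept Conjecture~\ref{conj:erdos} open for $k\ge4$. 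A secondary difficulty is that shifting erases the information needed to see when equality holds, so the whole induction should carry a stability statement alongside the bound, which complicates the bookkeeping at the crossover.
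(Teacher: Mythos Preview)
The statement you are trying to prove is Conjecture~\ref{conj:erdos}, the Erd\H{o}s Matching Conjecture, and the paper does \emph{not} prove it. The paper only states it as an open conjecture, summarizes the known partial results (the cases $k\le 3$, the Erd\H{o}s--Ko--Rado case $s=1$, and Frankl's bound $n\ge (2k-1)s+k$), and then proves instead the weaker Theorem~\ref{thm:main}, which gives an upper bound of the form $\binom nk-\binom{n-s}k+\frac{(2-\alpha)k-1}{\alpha k-1}s\binom{n-s-1}{k-1}$ under the hypothesis $n\ge\alpha k(s+1)+k-1$ with $\alpha\in(1,2-1/k]$. That theorem is proved by combining Frankl's shadow inequality $|\F|\le s|\partial\F|$ with his nested cross-dependent lemma, applied to the links $\F(\{i\})$ after partitioning by intersection with $[s+1]$; it does not settle the conjecture.

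Your proposal is an outline of the standard shifting/link-decomposition strategy, and you yourself identify the gap correctly: in the intermediate range $k(s+1)\le n\le c\cdot ks$ the recursion on $s$ via the link at vertex~$1$ is lossy, and closing that loss is precisely the unresolved content of the conjecture for $k\ge 4$. So what you have written is not a proof but a survey of where the difficulty lies. There is nothing to compare against in the paper, because the paper does not claim a proof of this statement; if your intention was to prove the result the paper actually establishes, you should be addressing Theorem~\ref{thm:main} instead, whose proof is short and does not require the crossover analysis you describe.
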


The two quantities in the above conjecture come from the following two simple constructions.
\[
\A(k,s):= \binom{[k(s+1)-1]}{k}, \, \,  \A(n,1,s) := \left \{ A\in \binom{[n]}{k} : A\cap [s]\neq\emptyset \right\}.
\]
Note that $ \A(n,1,s)$ is isomorphic to $G(s)$.

The case $s=1$ is the classical Erd\H{o}s-Ko-Rado Theorem \cite{EKR}.
For $k=1$ the conjecture is trivial and for $k=2$ it was proved by Erd\H{o}s and Gallai \cite{ErGa59}.
For general $k\ge 3$, Erd\H{o}s \cite{erdos65} proved the conjecture for $n > n_0(k,s)$.
Bollob\'as, Daykin and Erd\H{o}s \cite{BDE76} improved $n_0(k,s)$ to $2s k^3$, which was subsequently lowered to $3s k^2$ by Huang, Loh and Sudakov \cite{HLS}.
The best known bound on $n_0$ is $(2k - 1)s + k$ by Frankl \cite{Frankl13}.
Recently, Conjecture~\ref{conj:erdos} is verified for $k=3$ by {\L}uczak and Mieczkowska \cite{LuMi14} for large $n$ and by Frankl \cite{Frankl12} for all $n$. 

Here we show a result from a different point of view. Instead of looking for exact solutions for smaller values of $n$, we give an upper bound on the size of the family for the unsolved cases.
Note that Frankl \cite{frankl87_survey} showed that $|\F|\le s\binom{n-1}{k-1}$ for all $n,k,s$.

\begin{theorem}\label{thm:main}
Suppose $n, k, s$ are non-negative integers and $\a\in (1,2-1/k]$ is a real number such that $n\ge \a k(s+1) + k-1$.
Let $\F\subseteq \binom{[n]}{k}$ and $\nu(\F) = s$ then
\[
|\F|\le \binom nk - \binom{n-s}k + \frac{(2-\a)k-1}{\a k-1}s\binom{n-s-1}{k-1}.
\]
\end{theorem}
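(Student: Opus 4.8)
The plan is to run a weight-shifting / kernel argument in the spirit of the standard approaches to the Erd\H{o}s matching conjecture. Fix a maximum matching $M=\{e_1,\dots,e_s\}$ in $\F$, and let $T=e_1\cup\cdots\cup e_s$, so $|T|=ks$, and write $U=[n]\setminus T$ with $|U|=n-ks\ge \a k(s+1)+k-1-ks=(\a-1)ks+\a k+k-1$, which is comfortably large. Every edge of $\F$ meets $T$ (by maximality of $M$), so I split $\F$ according to how it meets $T$. The main point is to control, for each fixed way that an edge can meet $T$, how many edges of $\F$ extend it into $U$; maximality of $M$ gives local Erd\H{o}s--Ko--Rado--type restrictions, and the hypothesis $n\ge \a k(s+1)+k-1$ is exactly the bound that makes the relevant sub-families satisfy a shifted EKR condition so that Frankl's bound $|\F|\le s\binom{n-1}{k-1}$ (applied to appropriate link hypergraphs) can be invoked.

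Concretely, I would fix a vertex ordering and use the shifting (compression) operator $S_{ij}$ for $i<j$: shifting cannot decrease $|\F|$, preserves $\nu(\F)=s$ (this is classical), so WLOG $\F$ is shifted. For a shifted family, the key structural fact is that if $A\in\F$ and $A'$ is obtained from $A$ by replacing some element by a smaller one not in $A$, then $A'\in\F$; iterating, one shows that $\F$ cannot contain $s+1$ disjoint edges all lying "far to the right," which forces a large initial segment $[m]$ (with $m$ roughly $\a k(s+1)$, or more precisely tuned so that $\A(k,s)$-type families are subsumed) to behave well. The cleanest route: let $\F_1=\{A\in\F: A\cap[s]=\emptyset\}$ be the edges avoiding the first $s$ vertices; then $\F\setminus\F_1\subseteq\A(n,1,s)$ contributes at most $\binom nk-\binom{n-s}k$. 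For $\F_1$, since $\F$ is shifted and $\F_1$ avoids $[s]$, one argues $\nu(\F_1)\le s$ still but now every edge of $\F_1$ meets $T\setminus[s]$; a counting over which vertex of $e_i\setminus(\text{first coordinate})$ each edge uses, combined with Frankl's $s\binom{n-1}{k-1}$ estimate on the link of each such vertex restricted to the remaining ground set of size $n-s-1$, yields the bound $\frac{(2-\a)k-1}{\a k-1}\,s\binom{n-s-1}{k-1}$ after the arithmetic in which the coefficient $\frac{(2-\a)k-1}{\a k-1}$ appears as $\frac{n-\a k(s+1)-k+1}{\,\cdot\,}$-type ratio simplified using $n=\a k(s+1)+k-1$ at the extreme (and monotonicity handles larger $n$).

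The hard part will be getting the constant $\frac{(2-\a)k-1}{\a k-1}$ exactly right rather than something merely of the same order. This requires being careful that the "overcounting" in summing $s$ links of size $\le s\binom{n-s-1}{k-1}$ is divided by the correct factor — essentially each edge of $\F_1$ is counted once per vertex it shares with $T\setminus[s]$, and the shifted structure plus $\nu=s$ bounds that multiplicity from below by $\frac{\a k-1}{(2-\a)k-1}$-worth of slack; making this precise is where the condition $\a\le 2-1/k$ enters (so that $(2-\a)k-1\ge k-1\ge 0$ and the fraction is a genuine nonnegative coefficient). I would first nail down the $\a\to 2-1/k$ boundary case, where the statement should essentially reduce to a clean consequence of Frankl's theorem, and then recover general $\a\in(1,2-1/k]$ by checking the inequality is monotone in $n$ for fixed $s,k$, which reduces it to the threshold $n=\a k(s+1)+k-1$.
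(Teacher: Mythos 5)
Your general framework (shift $\F$, peel off the part of $\F$ hitting a small initial segment, bound the remainder) is in the right spirit, but there is a genuine gap at exactly the point you flag as ``the hard part,'' and the tool you propose to fill it cannot do the job. Frankl's bound $|\G|\le s\binom{N-1}{k-1}$ for a $k$-graph $\G$ on $N$ vertices with $\nu(\G)\le s$, applied to your $\F_1$ on the ground set $[s+1,n]$ of size $n-s$, gives $|\F_1|\le s\binom{n-s-1}{k-1}$. But the coefficient you need, $\tfrac{(2-\a)k-1}{\a k-1}$, is strictly less than $1$ for every $\a>1$ (and equals $0$ at $\a=2-1/k$, not $\ge k-1$ as you wrote), so a straight application of that bound, or a link/multiplicity count that merely redistributes it, cannot reach the stated inequality. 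There is no ``division by multiplicity'' argument available here that will squeeze the factor below $1$ without new input, and the monotonicity-in-$n$ reduction you propose does not rescue this since the shortfall already occurs at the threshold.

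The paper closes this gap with two specific ingredients you do not invoke. First, it partitions by intersection with $[s+1]$ (not $[s]$): for $|Q|\ge 2$ one has $|\F(Q)|\le|\A(Q)|$ for free, and the leftover is $|\F(\emptyset)|+\sum_{i=1}^{s+1}|\F(\{i\})|$. Second, it uses Frankl's shadow inequality $s|\partial\F(\emptyset)|\ge|\F(\emptyset)|$ together with the observation $\partial\F(\emptyset)\subseteq\{H: H\cup\{s+1\}\in\F(\{s+1\})\}$ to absorb $|\F(\emptyset)|$ into $s|\F(\{s+1\})|$. This reduces everything to the single inequality $\sum_{i=1}^{s}|\F_i|+(s+1)|\F_{s+1}|\le \tfrac{s}{\beta}\binom{m}{k-1}$ for the (nested, cross-dependent) links $\F_i$, and that is precisely Frankl's Theorem~3.1, with $\beta=\tfrac{\a k-1}{2k-2}$ chosen so that the hypothesis $n\ge \a k(s+1)+k-1$ matches the requirement $t\ge\beta(2s+1)$. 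The coefficient $\tfrac{(2-\a)k-1}{\a k-1}$ then appears because $1+\tfrac{(2-\a)k-1}{\a k-1}=\tfrac{2k-2}{\a k-1}=\tfrac1\beta$; it is the tunable $\beta$ in the nested/cross-dependent lemma, not a multiplicity count over $T$, that produces a coefficient below $1$. Without citing (or reproving) the shadow inequality and the nested/cross-dependent lemma, the proposal does not yield the claimed constant.
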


Note that Theorem~\ref{thm:main} can be translated into the language of $m_0^s(k,n)$.
In fact, for any upper bound $h(n,k,s)$ on $|\F|$ where $\nu(\F)=s\le n/k-1$, we immediately have
\[
m_0^{s+1}(k,n) \le h(n,k,s)+1.
\]





\section{Proof of Theorem~\ref{thm:main}}

Our proof of Theorem~\ref{thm:main} is adapted from the proof of \cite[Theorem 1.1]{Frankl13}.
Let us first recall two results from \cite{Frankl13}.
For a family $\F\subset \binom{[n]}k$, its shadow is defined as
\[
\partial \F : = \left\{ G\in \binom{[n]}{k-1}: \exists F\in \F,\, G\subset F \right\}.
\]

\begin{theorem}\cite[Theorem 1.2]{Frankl13}\label{thm:shadow}
If $\F\subseteq \binom{[n]}{k}$ and $\nu(\F) = s$, then
\[
s|\partial{\F}|\ge |\F|.
\]
\end{theorem}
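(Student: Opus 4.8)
I would run the compression (shifting) method and then exploit the rigidity of left-compressed families with prescribed matching number. \emph{Reduction to left-compressed families.} For $1\le i<j\le n$ let $S_{ij}$ be the shift replacing each $F\in\F$ with $j\in F$, $i\notin F$ by $(F\setminus\{j\})\cup\{i\}$ whenever the latter is not already present. I would first record three standard facts: $S_{ij}$ preserves $|\F|$; it does not increase the shadow, $|\partial(S_{ij}\F)|\le|\partial\F|$; and it does not increase the matching number, $\nu(S_{ij}\F)\le\nu(\F)$ (a matching of $S_{ij}\F$ contains at most one ``new'' edge, which after one further swap can be exchanged back into $\F$ without destroying disjointness). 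Iterating until the family is stable produces a left-compressed $\G$ with $|\G|=|\F|$, $|\partial\G|\le|\partial\F|$ and $\nu(\G)=:s'\le s$; since then $s|\partial\F|\ge s|\partial\G|\ge s'|\partial\G|$, it suffices to prove the inequality for left-compressed families.

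\emph{Structure.} Let $\G$ be left-compressed with $\nu(\G)=s$, and write $[sk]=B_1\cup\cdots\cup B_s$ with $B_i=\{(i-1)k+1,\dots,ik\}$. The basic structural fact is that every edge of $\G$ meets $[sk]$: if some $G\in\G$ were disjoint from $[sk]$ then $\min G\ge sk+1$, so each $B_j$ is dominated by $G$ entrywise and hence lies in $\G$, making $\{B_1,\dots,B_s,G\}$ a matching of size $s+1$ --- a contradiction. (A brief extremal argument, taking a matching of size $s$ that minimises the sum of its elements and pushing elements downward, in fact shows $\G$ contains a perfect matching of $[sk]$.)

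\emph{From the structure to the bound.} One half-step is essentially free: for any minimum fractional vertex cover $w$ of $\G$, summing $\sum_{v\in F}w(v)\ge1$ over $F\in\G$ and using that the link $\{G\setminus\{v\}:v\in G\in\G\}$ is a subfamily of $\partial\G$ gives $|\G|\le\sum_v w(v)\,|\{G\in\G:v\in G\}|\le\tau^*(\G)\,|\partial\G|$, which already closes the argument when $\tau^*(\G)=\nu(\G)=s$. In general $\tau^*(\G)$ can exceed $s$, and then one must work with the blocks directly: partition $\G$ according to the least $i$ with $F\cap B_i\ne\emptyset$ and, inside each class, delete a vertex of $B_i$ chosen so canonically that left-compressedness lets the edge be recovered from its image, making the resulting map into $[s]\times\partial\G$ injective; classes that are small, of size at most $\binom{2k-1}{k}$, can instead be absorbed using the Lov\'asz form of Kruskal--Katona, which guarantees $|\partial\mathcal{H}|\ge|\mathcal{H}|$ in that range.

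\emph{Main obstacle.} The real difficulty is the factor $k$: since every edge meets a $k$-element block, the naive covering of $\G$ by vertex-stars and the naive ``delete one vertex'' map lose a factor $k$, and this loss is genuine for $\G=\binom{[(s+1)k-1]}{k}$, which is extremal (there $|\G|=s|\partial\G|$). The crux is to cut the multiplicities down to exactly $s$ by making the deleted vertex rigid enough to be reconstructed from its image, while at the same time respecting the second extremal configuration $\{F:F\cap[s]\ne\emptyset\}$ (for which $|\F|/|\partial\F|$ tends to $s$); this is the step I expect to require the most care.
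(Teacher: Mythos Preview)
The paper does not prove this statement at all; it is quoted verbatim from Frankl's 2013 paper and used as a black box in the proof of Theorem~\ref{thm:main}. So there is no ``paper's own proof'' to compare against.

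As for the proposal on its own merits: the reduction to shifted families is correct and standard, and the structural fact that every edge of a shifted $\F$ with $\nu(\F)=s$ meets $[sk]$ is valid. But the argument is genuinely incomplete, and you say so yourself. The inequality $|\F|\le\tau^*(\F)\,|\partial\F|$ is correct but, as you note, does not close the gap when $\tau^*(\F)>s$; for the extremal family $\binom{[k(s+1)-1]}{k}$ one has $\tau^*=\tfrac{k(s+1)-1}{k}>s$ whenever $k\ge 2$, so this route really is blocked. Your block-based suggestion---partition $\F$ by the first $B_i$ it meets and delete a canonical vertex of $B_i$ so that the image determines $F$---is precisely the statement to be proved, not a proof of it; and the proposed Kruskal--Katona absorption of ``small classes'' is not connected to the rest of the argument in any concrete way.

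Frankl's actual proof is organised differently from your outline. After shifting, the relevant arithmetic lives in intervals of length $s+1$, not in the $k$-element blocks $B_1,\ldots,B_s$: one first shows that every $F=\{a_1<\cdots<a_k\}\in\F$ has an index $j$ with $a_j\le (s+1)j-1$ (otherwise the $s+1$ pairwise disjoint sets $\{i,(s+1)+i,\ldots,(k-1)(s+1)+i\}$ for $1\le i\le s+1$ are each coordinatewise dominated by $F$, hence lie in $\F$, contradicting $\nu(\F)=s$), and then chooses carefully which such $a_j$ to delete so that the resulting map into $\partial\F$ is at most $s$-to-one. Your ``first block hit'' decomposition is the wrong granularity: deleting a vertex from a $k$-element block $B_i$ leaves $k$ candidates and no evident way to recover it, whereas the $(s+1)$-interval structure leaves only $s$ candidates once the position is pinned down. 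That missing injectivity argument is the entire content of the theorem, and your proposal does not supply it.
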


The families $\F_1, \F_2,\dots, \F_{s+1}$ are called \emph{nested} if $\F_{s+1}\subseteq \F_s \subseteq \cdots \subseteq \F_1$ holds.
The families $\F_1, \F_2,\dots, \F_{s+1}$ are called \emph{cross-dependent} if there is no choice of $F_i\in \F_i$ such that $F_1,\dots, F_{s+1}$ are pairwise disjoint.
Here we use a theorem in \cite{Frankl13} in a slightly different form, which follows from the original proof.

\begin{theorem}\cite[Theorem 3.1]{Frankl13}\label{thm:ineq}
Let $\beta\in (0,1)$ and let $\F_1,\F_2,\dots, \F_{s+1}\subseteq \binom{Y}{\ell}$, be nested, cross-dependent families, $|Y|\ge t\ell$. Suppose further that $t\ge \beta(2s+1)$, then
\[
|\F_1|+|\F_2|+\cdots+ |\F_{s}| + (s+1)|\F_{s+1}|\le \frac{s}\beta\binom{|Y|}{\ell}.
\]
\end{theorem}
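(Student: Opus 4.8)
\textbf{Proof proposal for Theorem~\ref{thm:main}.}
The plan is to follow the shadow-counting strategy of Frankl, splitting $\F$ according to how it meets a maximum matching, and then feeding the ``tail'' part into Theorem~\ref{thm:ineq}. Fix a maximum matching $M = \{F_1, \dots, F_s\}$ in $\F$ and set $W := V(M) = \bigcup_i F_i$, so $|W| = ks$; write $Y := [n] \setminus W$, so $|Y| = n - ks \ge \a k(s+1) + k - 1 - ks$. Every edge of $\F$ must meet $W$ (else we could extend $M$), so I would like to charge each edge to its intersection with $W$. The easy part is the edges that meet $W$ in at least one vertex but are ``spread out'': the dominant term $\binom{n}{k} - \binom{n-s}{k}$ should come from bounding the number of $k$-sets meeting a fixed $s$-set — this is exactly $|\A(n,1,s)|$, which is the extremal number we are comparing against, so that part is essentially a volume count. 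The work is in showing the surplus over this count is at most $\frac{(2-\a)k - 1}{\a k - 1}\, s\binom{n-s-1}{k-1}$.

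The main step is to set up the cross-dependent families correctly. For each $i \in [s]$, consider the edges of $\F$ that, among the matching-vertices, touch only $F_i$ — more precisely I would define, for a fixed choice of which vertices of $W$ an edge uses, a ``link'' family living in $\binom{Y}{\ell}$ for the appropriate $\ell$, and argue these $s$ link families together with one more family (coming from edges entirely disjoint from the ``core'' $s$-set, i.e.\ the part that witnesses the EKR-type bound) form $s+1$ nested cross-dependent families: if one could pick pairwise-disjoint representatives, one from each, that would contradict $\nu(\F) = s$, since those representatives plus a suitable sub-configuration would give $s+1$ disjoint edges. Nestedness will need to be arranged by ordering the links so that $\F_{s+1} \subseteq \cdots \subseteq \F_1$; if the natural link families are not already nested I would replace $\F_j$ by $\bigcap_{i \le j}$ of a reindexed collection, or symmetrize, exactly as in Frankl's argument. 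Then Theorem~\ref{thm:ineq} with $\ell$ the residual uniformity and $\beta := \frac{\a k - 1}{k(2s+1)/(s+1)}$ — chosen so that the hypothesis $t \ge \beta(2s+1)$ becomes the hypothesis $n \ge \a k(s+1) + k - 1$ — yields $\sum_{j=1}^{s}|\F_j| + (s+1)|\F_{s+1}| \le \frac{s}{\beta}\binom{|Y|}{\ell}$, and unwinding $s/\beta$ produces the coefficient $\frac{(2-\a)k-1}{\a k - 1}$ after simplification. Theorem~\ref{thm:shadow} may be needed as an auxiliary tool to bound the contribution of edges meeting $W$ in two or more matching-edges, converting such a count into a shadow count that is absorbed into the main term.

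The hard part, and the step I expect to be the real obstacle, is the bookkeeping that identifies precisely which sub-families of $\F$ play the roles of the nested cross-dependent $\F_1, \dots, \F_{s+1}$, and checking that the parameter $t$ (essentially $|Y|/\ell$) satisfies $t \ge \beta(2s+1)$ with the claimed $\beta$ for exactly the range $\a \in (1, 2 - 1/k]$; the boundary case $\a = 2 - 1/k$ is where the coefficient $\frac{(2-\a)k-1}{\a k-1}$ vanishes, recovering the exact Erd\H{o}s conjecture bound, so the constants must be tracked exactly with no slack. A secondary subtlety is handling the contribution of edges that intersect $W$ heavily (in many matching-edges at once): these must be shown not to overflow the main term $\binom{n}{k} - \binom{n-s}{k}$, which I expect to control by a direct injection into $k$-sets meeting a fixed $s$-set together with the shadow bound of Theorem~\ref{thm:shadow}. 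Once the family decomposition is pinned down, the remaining inequalities should be routine manipulations of binomial coefficients.
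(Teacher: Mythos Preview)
Your proposal targets Theorem~\ref{thm:main} (Theorem~\ref{thm:ineq} is quoted from \cite{Frankl13}, not proved here), and the high-level plan --- decompose $\F$, apply the shadow bound, feed link families into Theorem~\ref{thm:ineq} --- matches the paper. But the decomposition you propose is the wrong one, and this is a genuine gap rather than bookkeeping.

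The paper does \emph{not} partition by intersection with the vertex set $W=V(M)$ of a maximum matching. It first passes to a \emph{stable} (shifted) family and partitions by intersection with the fixed set $[s+1]$. This single move resolves every loose end in your outline. The link families $\F_i:=\{F\setminus\{i\}:F\cap[s+1]=\{i\}\}$, $i\in[s+1]$, are automatically nested by shiftedness (your proposed fix of intersecting goes the wrong way: it shrinks the sum you want to upper-bound). There are $s+1$ of them, not $s$, and the weight $(s+1)$ on $\F_{s+1}$ is precisely what absorbs $|\F(\emptyset)|$ via $|\F(\emptyset)|\le s\,|\partial\F(\emptyset)|\le s\,|\F(\{s+1\})|$ (the second inequality by stability). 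Cross-dependence is immediate: pairwise disjoint $G_i\in\F_i$ yield $s+1$ pairwise disjoint edges $G_i\cup\{i\}\in\F$. The ground set is $Y=[s+2,n]$ with $|Y|=n-s-1$, so Theorem~\ref{thm:ineq} outputs exactly $\binom{n-s-1}{k-1}$ with $\ell=k-1$ and $\beta=\tfrac{\a k-1}{2k-2}$ (independent of $s$). By contrast, your $|Y|=n-ks$ would give the wrong binomial, your matching-link families do not even have constant residual uniformity (an edge can meet a single matching edge in anywhere from $1$ to $k$ vertices), and your $s$-dependent $\beta$ is another symptom of the wrong $Y$. Finally, intersections $Q\subseteq[s+1]$ with $|Q|\ge 2$ are handled trivially by $|\F(Q)|\le|\A(Q)|=\binom{n-s-1}{k-|Q|}$; no separate ``heavy intersection'' analysis is needed.
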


%

It is well known that in proving Theorem~\ref{thm:main} one can assume that $\F$ is \emph{stable}.
That is, for all $1\le i<j\le n$ and $F\in \F$, the conditions $i\notin F$, $j\in F$ imply that $F\cup \{i\}\setminus \{j\}$ is in $\F$ as well.

\begin{proof}[Proof of Theorem~\ref{thm:main}]
Let $\F\subset \binom{[n]}k$ be a stable family with $\nu(\F)=s$, $n\ge \a k (s+1) + k-1$.
Note that $\a>1$ and thus $n > k (s+1)$.
We need to show that
\begin{equation*}
|\F|\le |\A(n,1,s)| + \frac{(2-\a)k-1}{\a k-1}s\binom{n-s-1}{k-1}.
\end{equation*}
Let us write $\A$ instead of $\A(n,1,s)$ throughout the proof.
In order to compare $\F$ and $\A$, we partition both families according to the intersection of their edges with $[s+1]$: For a subset $Q\subset [s+1]$ define
\begin{align*}
&\F(Q):= \{F\in \F: F\cap[s+1]=Q\},
&\A(Q):= \{A\in \A: A\cap[s+1]=Q\}.
\end{align*}
Let $m:=n-s-1$ and note that for $|Q|\ge 2$, we have $|\A(Q)|=\binom{m}{k-|Q|}$, which implies $|\F(Q)|\le |\A(Q)|$.
For $1\le i\le s$, $|\A(\{i\})|=\binom{m}{k-1}$ and $\A(\{s+1\})=\A(\emptyset)=\emptyset$.
Thus it suffices to show
\begin{equation}\label{eq:goal1}
|\F(\emptyset)| + \sum_{i=1}^{s+1} |\F(\{i\})| \le s\binom{m}{k-1} + \frac{(2-\a)k-1}{\a k-1}s\binom{m}{k-1} = \frac{2k-2}{\a k-1}s\binom{m}{k-1}.
\end{equation}

Note that $\nu(\F(\emptyset))\le s$ and $|\partial (\F(\emptyset))| \le |\F(\{s+1\})|$, where the latter is because every $H\in \partial (\F(\emptyset))$ satisfies that $H\cup\{s+1\}\in \F(\{s+1\})$.
Then by Theorem~\ref{thm:shadow}, we have
\[
|\F(\emptyset)|\le \nu(\F(\emptyset)) |\partial (\F(\emptyset))| \le s |\F(\{s+1\})|.
\]
Plugging this into \eqref{eq:goal1}, we see that it suffices to show
\begin{equation}\label{eq:1}
|\F(\{1\})|+\cdots+|\F(\{s\})|+\left( s +1 \right)|\F(\{s+1\})|\le \frac{2k-2}{\a k-1} s \binom{m}{k-1}.
\end{equation}
To apply Theorem~\ref{thm:ineq} set $\F_i:=\{F\setminus \{i\}:F\in \F(\{i\})\}$.
Since $\F$ is stable, $\F_1,\dots, \F_{s+1}$ are nested.
Also, since $\nu(\F)=s$, $\F_1,\dots, \F_{s+1}$ are cross-dependent.
Setting $\ell:=k-1$, $Y:=[s+2, n]$ and thus
\[
|Y|=m\ge (\a k-1)(s+1)+k-1 = \left( \frac{\a k-1}{k-1} \frac{s+1}{2s+1} (2s+1)+1\right)(k-1) > \left \lceil \frac{\a k-1}{2k-2} (2s+1) \right\rceil (k-1).
\]
So all conditions of Theorem~\ref{thm:ineq} are satisfied for $t=\lceil\frac{\a k-1}{2k-2} (2s+1)\rceil$ and $\beta =\frac{\a k-1}{2k-2}$, and thus \eqref{eq:1} follows from Theorem~\ref{thm:ineq}.
\end{proof}

\section{Proof of Theorem~\ref{thm:frac_mat}}

Here we use the approach in \cite{KOTo} as well as two propositions. In fact, we only replace their \cite[Theorem 1.8]{KOTo} by our Theorem~\ref{thm:main}.

\begin{proposition}\cite[Proposition 4.1]{KOTo}\label{prop:41}
Let $k, d, n$ be integers with $n\ge k\ge 3$, and $1\le d\le k-2$.
Let $a\in [0, 1/k]$.
Suppose $H$ is a $k$-uniform hypergraph on $n$ vertices such that $\delta_d(H)\ge f_0^{a n}(k-d, n-d)$, then $H$ has a fractional matching of size $a n$.
\end{proposition}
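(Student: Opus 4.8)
The plan is to work through the LP dual. Since the maximum size of a fractional matching of $H$ equals the minimum size $\tau^*(H)$ of a fractional vertex cover — a weighting $y\colon V(H)\to[0,\infty)$ with $\sum_{v\in e}y_v\ge 1$ for all $e\in E(H)$ — it suffices to prove $\tau^*(H)\ge an$. The only input from the hypothesis will be used through the links: write $H_S$ for the $(k-d)$-graph on $V(H)\setminus S$ with edges $\{e\setminus S : S\subseteq e\in E(H)\}$; then $|E(H_S)|=\deg_H(S)\ge f_0^{an}(k-d,n-d)$ for every $d$-set $S$, so by the definition of $f_0^{an}$ each link carries a fractional matching of size $an$, i.e.\ $\nu^*(H_S)\ge an$. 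In particular the quantitative bounds on $f_0^{an}(k-d,n-d)$ (such as those furnished by Theorem~\ref{thm:main}) are used only in the black-box form ``$\nu^*(H_S)\ge an$ for every $S$''.

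For the easy half, suppose $\tau^*(H)<an$ and fix an optimal fractional cover $y$. Choose $S$ to consist of $d$ vertices of smallest weight, so that $y(S):=\sum_{v\in S}y_v$ is at most the average, $y(S)\le \tfrac dn\,\tau^*(H)<\tfrac dn\,an=da\le d/k<1$. Then $(1-y(S))^{-1}\,y|_{V(H)\setminus S}$ is a fractional cover of $H_S$, whence $an\le\nu^*(H_S)\le\dfrac{\tau^*(H)-y(S)}{1-y(S)}$, which rearranges to
\[
\tau^*(H)\ \ge\ an-(an-1)\,y(S).
\]
If $an\le 1$ the right-hand side is at least $an$, contradicting $\tau^*(H)<an$, so the proposition holds whenever $an\le 1$. (On the primal side this is the same as gluing fractional matchings $w^S$ of size $an$ from all links, assigning to $e\in E(H)$ a weight proportional to $\sum_{S\in\binom{e}{d}}w^S(e\setminus S)$: after renormalising so that no vertex is overloaded one obtains a fractional matching of size $an\big/\bigl(1+(an-1)d/n\bigr)$, which is at least $an$ precisely when $an\le 1$.)

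The difficulty, and the step I expect to be the crux, is the range $an>1$: there the displayed inequality only gives $\tau^*(H)\gtrsim an/(1+da)$, and correspondingly the glued fractional matching loses a constant factor because a vertex used to rebuild an edge of $H$ out of an edge of a link $H_S$ is loaded up to $an$, not $1$, by $w^S$. Removing this loss is exactly what the argument of K\"uhn, Osthus and Townsend accomplishes, and reconstructing it is where I would spend the effort: instead of a single link one combines the links of a well-chosen family of pairwise disjoint $d$-sets $S_1,\dots,S_r$ together with a reserved vertex set $T\supseteq S_1\cup\dots\cup S_r$, taking in each $H_{S_i}$ a fractional matching that avoids $T\setminus S_i$ (so that the appended vertices of one $S_i$ are never used inside another link), gluing these with suitable weights, and balancing $r$ and $|T|$ against the loss $\nu^*\bigl(H_{S_i}[V(H)\setminus(T\setminus S_i)]\bigr)\ge an-|T\setminus S_i|$ incurred by the reservation. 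The bookkeeping in this balancing is the technical heart of the proof; it is there that the hypothesis $a\le 1/k$ is used, through the inequality $kan\le n$ that lets the reserved and matched vertices fit inside $V(H)$, and combining the outcome with the easy case above finishes the argument.
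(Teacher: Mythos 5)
Your reduction to the links is correct, and so is the LP-duality setup: writing $\tau^*$ for the fractional cover number of $H$ and restricting an optimal cover to $V\setminus S$ for the $d$ lightest vertices $S$ gives $\tau^*\ge an-(an-1)y(S)$, and the primal average over all $\binom{n}{d}$ links gives a fractional matching of size $an/\bigl(1+(an-1)d/n\bigr)$; both computations check out. But the only regime you actually resolve is $an\le 1$, which is vacuous in the present context — the proposition is invoked with $an=n/k$, and a fractional matching of size $1$ exists as soon as $H$ has any edge at all. The entire content of the statement is the range $1<an\le n/k$, and for that range you offer a sketch, not a proof, and you say so yourself (``reconstructing it is where I would spend the effort''; ``the bookkeeping in this balancing is the technical heart of the proof''). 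That admission is accurate, and moreover the sketch does not survive inspection.

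Concretely, take the scheme you describe: pairwise disjoint $d$-sets $S_1,\dots,S_r$, a reserved set $T=S_1\cup\dots\cup S_r$, and in each $H_{S_i}$ a fractional matching $w_i$ on $V\setminus T$ of size $|w_i|\ge an-(r-1)d$, glued with a common scaling factor $\lambda$. The total weight is $\lambda\sum_i|w_i|\ge\lambda r\bigl(an-(r-1)d\bigr)$; a vertex of $S_j$ is loaded up to $\lambda|w_j|$; a vertex outside $T$ is loaded up to $\lambda r$. Hence $\lambda\le 1/\max\bigl(an-(r-1)d,\,r\bigr)$ (truncating $|w_j|$ to $an-(r-1)d$ if necessary). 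If $an-(r-1)d\ge r$ one gets a matching of size $r$, and $r\ge an$ together with $an-(r-1)d\ge r$ forces $an\le1$ — the trivial case again. If $an-(r-1)d<r$ one gets size $an-(r-1)d<an$, which is short. Enlarging $T$ beyond $S_1\cup\dots\cup S_r$ only shrinks the $|w_i|$'s. So balancing $r$ against $|T|$, in the form you propose, does not produce a fractional matching of size $an$; disjoint-link gluing loses the same constant factor as the single-link and all-links computations. The proposal is therefore incomplete: the easy case is correct but trivial, and the route sketched for the main case does not close the gap, so a genuinely different idea is still needed.
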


\begin{proposition}\cite[Proposition 2.3]{KOTo}\label{prop:23}
Suppose that $k\ge 2$ and $a\in (0, 1/k)$, $c\in (0,1)$ are fixed.
Then for every $\e>0$ there exists $n_0=n_0(k,\e)$ such that if $n\ge n_0$ and $f_0^{a n}(k,n)\le c\binom n k$ then $f_0^{a n+1}(k,n)\le (c+\e)\binom n k$.
\end{proposition}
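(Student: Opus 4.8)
The plan is to establish the statement in its direct form: there is $n_0=n_0(k,\e)$ such that for $n\ge n_0$, every $n$-vertex $k$-graph $H$ with $\delta_0(H)=|E(H)|\ge(c+\e)\binom nk$ has a fractional matching of size $an+1$ (if $c+\e>1$ there is nothing to prove). The idea is to augment a near-optimal fractional matching by one \emph{integral} edge. Since $H$ is dense it contains an edge $e_1$; I shall place weight $1$ on $e_1$ and then find, among the edges of $H$ disjoint from $V(e_1)$, a fractional matching of size $an$ supplied by the hypothesis. The one subtlety is that the hypothesis $f_0^{an}(k,n)\le c\binom nk$ is a statement about hypergraphs on \emph{exactly} $n$ vertices, while the edges disjoint from $V(e_1)$ naturally live on $n-k$ vertices; the remedy is to delete edges rather than vertices, i.e.\ to work with the $n$-vertex $k$-graph $H'$ on $V(H)$ whose edge set is $\{e\in E(H):e\cap e_1=\emptyset\}$ --- equivalently $H[V(H)\setminus V(e_1)]$ with $k$ extra isolated vertices --- so that the hypothesis applies to $H'$ verbatim.

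In more detail I would proceed as follows. First pick $n_0=n_0(k,\e)$ large enough that, for all $n\ge n_0$: $\binom{n-k}{k}\ge(1-\e)\binom nk$ (valid since $\binom{n-k}{k}/\binom nk\to1$ for fixed $k$), $an+1\le n/k$, and $(c+\e)\binom nk\ge 1$. Let $H$ be an $n$-vertex $k$-graph with $|E(H)|\ge(c+\e)\binom nk$; then $E(H)\ne\emptyset$, so fix $e_1\in E(H)$ and form $H'$ as above. The number of $k$-subsets of $V(H)$ meeting $V(e_1)$ is $\binom nk-\binom{n-k}{k}\le\e\binom nk$, so
\[
|E(H')|\ \ge\ |E(H)|-\e\binom nk\ \ge\ (c+\e)\binom nk-\e\binom nk\ =\ c\binom nk\ \ge\ f_0^{an}(k,n),
\]
the last step being the hypothesis. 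Hence $H'$, being an $n$-vertex $k$-graph with $\delta_0(H')\ge f_0^{an}(k,n)$, has a fractional matching $w'$ of size at least $an$, whose support --- by construction --- is disjoint from $V(e_1)$. Extend $w'$ to $w\colon E(H)\to[0,1]$ by $w(e_1)=1$ and $w(e)=0$ on the remaining edges. Each vertex of $V(e_1)$ then lies in exactly one edge of the support of $w$ and every other vertex carries the same load as under $w'$, so $w$ is a fractional matching of $H$ of size $1+\sum_e w'(e)\ge an+1$. Since $H$ was arbitrary, $f_0^{an+1}(k,n)\le(c+\e)\binom nk$.

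The argument is short; the only place needing (routine) attention is the choice of $n_0$, where one must absorb both the loss $\binom nk-\binom{n-k}{k}=o\!\left(\binom nk\right)$ from deleting the edges through $V(e_1)$ and the harmless $O(1)$ rounding errors involved in passing between ``$\delta_0\ge m$'' and edge density. An equivalent packaging that avoids the isolated-vertex padding is to first note the trivial monotonicity $f_0^{s}(k,n-k)\le f_0^{s}(k,n)$ (append $k$ isolated vertices to an extremal example on $n-k$ vertices), giving $f_0^{an}(k,n-k)\le c\binom nk$, and then apply this directly to the genuine $(n-k)$-vertex hypergraph $H[V(H)\setminus V(e_1)]$. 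I do not expect a real obstacle: Proposition~\ref{prop:23} is a soft bootstrapping lemma whose whole content is the trade-off ``add one full edge at the cost of losing $o(\binom nk)$ in the edge count''.
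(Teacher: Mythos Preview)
Your proof is correct. Note that the paper itself does not prove Proposition~\ref{prop:23}: it is quoted from \cite[Proposition~2.3]{KOTo} and used as a black box in the proof of Theorem~\ref{thm:frac_mat}, so there is no in-paper argument to compare against. Your approach --- fix one edge $e_1$ with weight $1$, observe that deleting all edges meeting $V(e_1)$ costs at most $\binom nk-\binom{n-k}{k}=o\bigl(\binom nk\bigr)$, and apply the hypothesis to the resulting $n$-vertex $k$-graph $H'$ --- is the standard bootstrapping one expects here, and your handling of the minor points (keeping $H'$ on $n$ vertices by retaining $V(e_1)$ as isolated vertices, and the equivalent packaging via $f_0^{s}(k,n-k)\le f_0^{s}(k,n)$) is clean. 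The only places needing care are the choice of $n_0$ (to absorb the $o(\binom nk)$ loss and ensure $an+1\le n/k$) and the $O(1)$ rounding between ``$\delta_0\ge m$'' and ``$|E|\ge c\binom nk$'', both of which you flag and treat correctly.
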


\begin{proof}[Proof of Theorem~\ref{thm:frac_mat}]
Let $k':=k-d$ and $n':=n-d$. Let $\a:=k/k'$ and $s+1:=\lfloor(n'-k'+1)/k\rfloor$, then applying Theorem~\ref{thm:main} with $n', k', s, \a$ implies that
\begin{align*}
m_0^{s+1}(k', n') &\le \binom {n'}{k'} - \binom{n'-s}{k'} + \frac{k-2 d-1}{k-1} s \binom{n'-s-1}{k'-1}+1  \\
&= \binom {n'}{k'} - \left(1- \frac{k-2 d-1}{k-1} s \cdot \frac{k-d}{n'-s} \right) \binom{n'-s}{k'}+1 = \left( g(k,d) +o(1)\right) \binom{n-d}{k-d}.
\end{align*}
Here the last equality is due to that $n'-s=(k-1+o(1)) s=(1-1/k+o(1))n'$, which follows from the definition of $s$.
Since $n/k \le s+3$, by Proposition~\ref{prop:23} and the trivial fact that $f_0^{s+1}(k', n')\le m_0^{s+1}(k', n')$, we get
\begin{align*}
f_0^{n/k}(k-d, n-d) =f_0^{n/k}(k', n')\le m_0^{s+1}(k', n')+o(1)\binom{n'}{k'} \le \left( g(k,d) +o(1)\right) \binom{n-d}{k-d}.
\end{align*}
So Theorem~\ref{thm:frac_mat} follows now from Proposition~\ref{prop:41}.
\end{proof}

\section{Acknowledgement}
We thank two anonymous referees for the comments and Andrew Treglown and Yi Zhao for helpful discussions and comments.

\bibliographystyle{plain}
\bibliography{Apr2015}

\end{document}